\documentclass[10pt]{amsart}
\usepackage[latin1]{inputenc}
\usepackage[T1]{fontenc}
\usepackage[english]{babel}

\usepackage{amsmath,amssymb,amsthm,amsfonts}

 \setlength{\parskip}{1ex}

 \newtheorem{theo}{Theorem}[section]
 
 \newtheorem{lem}[theo]{Lemma}
 \newtheorem{cor}[theo]{Corollary}

\newcommand{\loglike}[1]{\mathop{\rm #1}\nolimits}

\newcommand{\cone}{\loglike{cone}}
\newcommand{\eps}{\varepsilon}

\newcommand{\gam}{\gamma}

\newcommand{\cconv}{\overline{\mathrm{conv}}}

\newcommand{\lin}{\loglike{lin}}

\newcommand{\R}{{\mathbb{R}}}

\newcommand{\iy}{\infty}

 \renewcommand{\le}{\leqslant}
\renewcommand{\ge}{\geqslant}

\begin{document}

\begin{center}\small
[to appear in \emph{J.\ Math.\ Anal.\ Appl.} with doi \texttt{10.1016/j.jmaa.2012.06.031}]
\end{center}

\title[Extension of isometries between unit spheres]
  {Extension of isometries between unit spheres of finite-dimensional polyhedral Banach spaces}

\author{Vladimir Kadets}

\address[V.~Kadets]{Department of Mechanics and Mathematics\\ Kharkov V.N. Karazin National University\\
 pl. Svobody 4 \\ 61077 Kharkov, Ukraine}
\email{vova1kadets@yahoo.com}

\author{Miguel Mart\'{\i}n }

\address[M.~Mart\'{\i}n]{Departamento de An\'{a}lisis Matem\'{a}tico \\ Universidad de Granada \\ 18071 Granada, Spain}
\email{mmartins@ugr.es}

\thanks{The work of the first-named author
was partially supported by Junta de Andaluc\'{\i}a and FEDER grants FQM-185 and P09-FQM-4911 and by the program GENIL-PRIE of the CEI of the University of Granada. The second author was partially
supported by Spanish MICINN and FEDER project no.\
MTM2009-07498 and Junta de Andaluc\'{\i}a and FEDER grants FQM-185
and P09-FQM-4911.}

\date{June 20th, 2012}

\begin{abstract}
We prove that an onto isometry between unit spheres of
finite-dimensional polyhedral Banach spaces extends to a linear
isometry  of the corresponding spaces.
\end{abstract}
\maketitle

\section{Introduction}

In 1987, D.~Tingley proposed the following question
\cite{ting}: let $f$ be a bijective isometry between the unit
spheres $S_X$ and $S_E$ of real Banach spaces $X$, $E$
respectively. Is it true that $f$ extends to a linear
(bijective) isometry $F: X \longrightarrow E$ of the
corresponding spaces? Let us mention that this is equivalent to
the fact that the natural (positive) homogeneous extension of
$f$ (see \eqref{hom-extension}) is linear. He proved a useful
partial result:

\begin{theo}[Tingley's theorem \cite{ting}]\label{theo:tingley} If $X$ and $E$ are
finite-dimensional Banach spaces and $f: S_X \longrightarrow
S_E$ is a bijective isometry, then $f(-x) = - f(x)$ for all
$x\in S_X$.
\end{theo}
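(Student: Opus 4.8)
The plan is to reduce the antipodality statement to a metric characterization of the genuine antipode $-x$ that is visibly preserved by surjective isometries of spheres. First I would pass to the positively homogeneous extension $F\colon X\to E$, $F(0)=0$ and $F(x)=\|x\|\,f(x/\|x\|)$ for $x\neq 0$; this is a norm-preserving bijection whose restriction to $S_X$ is $f$, and the desired conclusion $f(-x)=-f(x)$ is exactly the assertion that $F$ is odd. For each $x\in S_X$ introduce the set of farthest points
\[
\Sig(x)=\{y\in S_X:\ \|x-y\|=\operatorname{diam} S_X\}=\{y\in S_X:\ \|x-y\|=2\}.
\]
By the Hahn--Banach theorem, $y\in\Sig(x)$ if and only if there is a norm-one functional $\fhi$ with $\fhi(x)=1$ and $\fhi(y)=-1$; in particular $\Sig(x)$ is a nonempty compact set containing $-x$, and in the polyhedral case it is a finite union of (reflected) faces of the unit ball.

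Next I would record the invariance that drives everything: since $f$ is a surjective isometry of $S_X$ onto $S_E$ it preserves the common diameter $2$, so $f(\Sig(x))=\Sig(f(x))$ for every $x$. Because $-f(x)\in\Sig(f(x))$, there is a unique $u\in\Sig(x)$ with $f(u)=-f(x)$, and the whole theorem is reduced to proving $u=-x$. It is convenient to reformulate this: the map $g(x):=-f(-x)$ is again a surjective isometry $S_X\to S_E$ (indeed $\|g(x)-g(y)\|=\|f(-x)-f(-y)\|=\|x-y\|$), and $f(-x)=-f(x)$ for all $x$ is equivalent to $f=g$; thus I must show that $f$ intertwines the two antipodal involutions.

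The crux is the third step: to single out the linear antipode $-x$ among the points of $\Sig(x)$ using distances alone. This is where the difficulty concentrates, and it is caused precisely by the failure of strict convexity: in a polyhedral (or merely non-rotund) space $\Sig(x)$ is typically a large, lower-dimensional set rather than the singleton $\{-x\}$, so ``being a farthest point'' does not by itself detect $-x$. I would isolate $-x$ by an extremal, second-order property that is expressible through the metric and hence transported by $f$. One workable route uses the symmetry $\Sig(-x)=-\Sig(x)$ together with a maximality principle: among all $y\in\Sig(x)$ the antipode $-x$ is the one for which the farthest-point set $\Sig(y)$ is as large as possible (it then contains $x$ and meets $\Sig(x)$ in the extreme, antipodal manner). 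Exploiting compactness of $S_X$, the finiteness of the face lattice in the polyhedral case, and the relation $x\in\Sig(y)\iff y\in\Sig(x)$, one upgrades this to a genuine distance-invariant characterization; applying $f$ and using $f(\Sig(\,\cdot\,))=\Sig(f(\,\cdot\,))$ then forces $f(-x)=-f(x)$.

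I expect the main obstacle to be exactly the construction of that distance-only invariant pinning down $-x$ in the presence of flat faces: a naive ``unique farthest point'' argument fails, and one must use finite-dimensionality in an essential way---through compactness, the existence and structure of faces, and a maximality or averaging argument over $S_X$---to recover the linear antipode from the purely metric data. Once this characterization is in place, oddness of $F$, and with it Tingley's theorem, follows immediately.
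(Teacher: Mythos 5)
A preliminary remark: the paper does not prove Theorem~\ref{theo:tingley} at all --- it is imported verbatim from Tingley's article \cite{ting} and used as a black box --- so there is no in-paper argument to measure yours against; your proposal has to stand on its own as a proof of Tingley's result, and it does not.

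Your first two steps are correct: $\|x-y\|=2$ iff some norm-one functional takes the value $1$ at $x$ and $-1$ at $y$; the farthest-point set $\Sig(x)=\{y\in S_X:\|x-y\|=2\}$ satisfies $f(\Sig(x))=\Sig(f(x))$; and the theorem reduces to showing that the unique $u\in\Sig(x)$ with $f(u)=-f(x)$ equals $-x$. But that reduction is essentially a restatement of the problem, all the content sits in your third step, and the device you propose there is false. Take $X=\ell_\infty^2$ and $x=(1,0)$. Then $\Sig(x)$ is the whole left edge $\{(-1,t):t\in[-1,1]\}$; for the antipode $-x=(-1,0)$ (indeed for every relative-interior point of that edge) the farthest-point set is exactly the right edge, whereas for the vertex $(-1,1)\in\Sig(x)$ it is the union of the right edge and the bottom edge, which is strictly larger. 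So ``the $y\in\Sig(x)$ whose farthest-point set is largest'' selects a vertex, not $-x$. The failure is structural rather than a poor choice of extremal principle: every $y$ in the relative interior of the left edge has literally the same $\Sig(y)$, and the same iterated farthest-point sets, so no invariant manufactured from the diametral relation $\|\cdot-\cdot\|=2$ alone can separate $(-1,0)$ from $(-1,\tfrac12)$. A correct metric characterization of the antipode must exploit distances strictly between $0$ and $2$, and producing it is precisely the hard part of Tingley's theorem; his published proof is substantially more involved and does not follow this route. As written, your proposal identifies the difficulty accurately but does not overcome it.
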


We recall that the classical Mazur-Ulam theorem  states that
every surjective isometry between $X$ and $E$ is affine and
that there is a result by P.~Mankiewicz \cite{Mankiewicz} which
states that every bijective isometry between convex bodies of
$X$ and $E$ can be uniquely extended to an affine isometry from
$X$ and $E$.

There is a number of publications devoted to Tingley's problem
(see \cite{ding} for a survey of corresponding results) and, in
particular, the problem is solved in positive for many concrete
classical Banach spaces. Surprisingly, the question for general
spaces remains open, even in dimension two.

Recently, L.~Cheng and Y.~Dong \cite{cheng} attacked the
problem for the class of polyhedral spaces (i.e.\ for those
spaces whose unit sphere is a polyhedron). Unfortunately their
interesting attempt failed by a mistake at the very end of the
proof. The authors told to us in a private communication that
they don't see how their proof can be repaired.

In this paper we present a new approach to Tingley's problem
that enables us to save partially the Cheng-Dong result.
Namely, we answer the problem in positive for
\textbf{finite-dimensional polyhedral spaces}. The idea of the
proof is to study the differentiability properties of $f$ and
of its homogeneous extension $F$. Although our main result is
about polyhedral spaces, for the sake of possible applications,
the technical differentiability lemmas are proved for general
finite-dimensional normed spaces.

\section{Notation}

Throughout the paper $X$, $E$ are $m$-dimensional Banach spaces
over the field of reals, $X^*$, $E^*$ are their dual spaces,
$S_X$, $B_X$, stand for the unit sphere and unit ball of the
corresponding space, $f: S_X \longrightarrow S_E$ is a
bijective isometry and, finally, $F: X \longrightarrow E$ is
the natural (positively) homogeneous extension of $f$, that is,
\begin{equation}\label{hom-extension}
F(0) = 0,\qquad F(x) = \|x\|\,f\left(x / \|x\|\right)\quad \bigl(x\in X\setminus\{0\}\bigr).
\end{equation}
Recall that, thanks to Tingley's theorem \ref{theo:tingley},
$F(-x)=-F(x)$ for every $x\in X$, so $F$ is homogeneous for the
negative scalars as well.

We will use the notation $\rho(x,y)=\|x-y\|$ for the metric in
both $S_X$ and $S_E$. We will use the notations $x^*(x)$ and
$\langle x^*,x\rangle$ to denote the action of $x^*\in X^*$ on
$x\in X$, and we also use the same notations for the action of
elements of $E^*$ on elements of $E$.

For every $A \subset X$, we denote by $\cone(A) = \{t x \,:\, x
\in A,\, t \ge 0\}$ the cone generated by $A$. For every $x \in
S_X$, we denote by $\gimel(x) \subset X^*$ the nonempty set of
support functionals of $x$, i.e.\ those $x^*\in X^*$ such that
$\|x^*\|=x^*(x)=1$. If $\gimel(x)$ consists of only one
element, we say that $x$ is a smooth point and the set of
smooth points of $S_X$ is denoted by $\Sigma(X)$. If $x \in
\Sigma(X)$, we denote the unique element of $\gimel(x)$ as
$\gamma(x)$. Recall that in finite-dimensional spaces, every
smooth point of the unit sphere is actually a Fr\'{e}chet
differentiability point for the map $x \longmapsto \|x\|$. This
means that for $x \in \Sigma(X)$, there is a function
$\eps_x(r)$ such that
\begin{equation} \label{smooth}
\frac{\eps_x(r)}{r} \underset{r \to
0}{\longrightarrow} 0 \quad \text{ and } \quad
\langle \gamma(x), z \rangle \le \|z\| \le (1 + \eps_x(r))\langle \gamma(x), z \rangle
\end{equation}
for every $z \in \cone(x + rB_X)$. For this and other standard
facts from convex geometry we refer to Rockafellar's book
\cite{rock}. Remark, that in the most valuable for us case of
polyhedral spaces, $x \in \Sigma(X)$ if and only if $x$ is an
interior point of an $(m-1)$-dimensional face and $\eps_x(r) =
0$ for sufficiently small $r$.

\section{The differentiability lemmas}

\begin{lem}\label{lem 1}
Let $x, y, y_n \in S_X$, $x \neq y$, such that $\frac{x -
y}{\|x - y\|} \in \Sigma(X)$ and suppose that
$$
y_n \longrightarrow y, \,\,\, \frac{y - y_n}{\|y - y_n\|} \longrightarrow u
 \qquad \mathrm{ as } \,\,\, n \to \infty.
$$
Then
\begin{equation} \label{lem 1 eq}
\frac{\rho(x, y_n) -  \rho(x, y)}{\rho(y, y_n)} \longrightarrow \left\langle \gamma
\left(\frac{x - y}{\|x - y\|}\right), u \right\rangle \,\, \mathrm{ as } \,\,\, n \to \infty.
\end{equation}
\end{lem}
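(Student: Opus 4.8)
The plan is to read the left-hand side of \eqref{lem 1 eq} as a directional derivative of the norm at $x-y$ in the direction $u$, and to extract its value from the smoothness estimate \eqref{smooth}. To set up notation, write $w = x - y$, so $w \neq 0$ and $\hat w := w/\|w\| = \frac{x-y}{\|x-y\|} \in \Sigma(X)$ by hypothesis. Put $r_n = \|y - y_n\| = \rho(y, y_n)$ and $v_n = (y - y_n)/r_n$, so that $r_n \to 0$, $\|v_n\|=1$, and $v_n \to u$. The elementary but crucial identity is
$$x - y_n = (x-y) + (y - y_n) = w + r_n v_n = \|w\|\Bigl(\hat w + \tfrac{r_n}{\|w\|}\,v_n\Bigr),$$
which exhibits $x - y_n$ as a positive multiple of a point of $\hat w + \rho_n B_X$, where $\rho_n := r_n/\|w\| \to 0$; hence $x-y_n \in \cone(\hat w + \rho_n B_X)$.

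First I would apply \eqref{smooth} at the smooth point $\hat w$ with radius $\rho_n$ to the vector $z = x - y_n$, obtaining the two-sided estimate
$$\langle \gamma(\hat w), x - y_n \rangle \le \|x - y_n\| \le (1 + \eps_{\hat w}(\rho_n))\langle \gamma(\hat w), x - y_n \rangle.$$
Next I would evaluate the functional exactly: since $\langle \gamma(\hat w), \hat w \rangle = 1$ gives $\langle \gamma(\hat w), w \rangle = \|w\|$, we get
$$\langle \gamma(\hat w), x - y_n \rangle = \|w\| + r_n\langle \gamma(\hat w), v_n \rangle = \rho(x,y) + r_n\langle \gamma(\hat w), v_n \rangle.$$
Subtracting $\rho(x,y) = \|w\|$ and dividing by $r_n = \rho(y,y_n)$ then turns the lower bound into $\langle \gamma(\hat w), v_n \rangle$, while the upper bound becomes $(1 + \eps_{\hat w}(\rho_n))\langle \gamma(\hat w), v_n \rangle + \eps_{\hat w}(\rho_n)\|w\|/r_n$.

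Finally I would pass to the limit by a sandwich argument. The lower bound tends to $\langle \gamma(\hat w), u \rangle$ because $v_n \to u$. For the upper bound the decisive observation is that the leftover error term reduces to $\eps_{\hat w}(\rho_n)\|w\|/r_n = \eps_{\hat w}(\rho_n)/\rho_n \to 0$, the cancellation of $\|w\|$ being exactly what makes the Fr\'echet-type condition $\eps_x(r)/r \to 0$ from \eqref{smooth} effective; combined with $\eps_{\hat w}(\rho_n) \to 0$ and $v_n \to u$, this forces the upper bound to the same limit $\langle \gamma(\hat w), u \rangle$, which gives \eqref{lem 1 eq}. The only real (and rather mild) obstacle is this bookkeeping: one must normalize into the cone $\cone(\hat w + \rho_n B_X)$ so that the radius entering \eqref{smooth} is $\rho_n = r_n/\|w\|$ rather than $r_n$, since only after this rescaling does the error divided by $r_n$ collapse to $\eps_{\hat w}(\rho_n)/\rho_n$; working with $r_n$ directly would leave a spurious factor of $\|w\|$ that does not obviously vanish.
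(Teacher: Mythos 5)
Your proposal is correct and follows essentially the same route as the paper: identify $x-y_n$ as lying in $\cone\bigl(\tfrac{x-y}{\|x-y\|}+\rho_n B_X\bigr)$ with $\rho_n=\|y-y_n\|/\|x-y\|$, apply \eqref{smooth}, evaluate the support functional exactly, and sandwich, with the error term collapsing to $\eps(\rho_n)/\rho_n\to 0$ precisely because of the rescaled radius. The only differences from the paper's proof are notational (the paper calls your $\rho_n$ its $r_n$ and leaves the error term in the form $\frac{\eps(r_n)}{r_n\|x-y\|}\langle\gamma,x-y_n\rangle$, which is the same quantity).
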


\begin{proof}
If we denote $r_n = \|y - y_n\|/\|x - y\|$
then
$$
\|(x - y_n) - (x - y)\| = r_n\|x - y\|,
$$
i.e.
$$
x - y_n \in \cone\left(\frac{x - y}{\|x - y\|} + r_n B_X\right),
$$
and we can use \eqref{smooth} to get
\begin{multline*}
\frac{\left\langle \gamma \left(\frac{x - y}{\|x -
y\|}\right), x - y_n \right\rangle - \|x - y\|}{\|y - y_n\|}
 \le \frac{\rho(x, y_n) -  \rho(x, y)}{\rho(y, y_n)} \\
 \le \frac{(1 + \eps_{\frac{x - y}{\|x - y\|}}(r_n))\left\langle \gamma \left(\frac{x - y}{\|x - y\|}\right),
x - y_n \right\rangle -  \|x - y\|}{\|y - y_n\|}.
\end{multline*}
Since $\|x - y\| = \left\langle \gamma \left(\frac{x - y}{\|x -
y\|}\right), x - y \right\rangle$, we can continue as follows:
\begin{multline*}
\left\langle \gamma \left(\frac{x - y}{\|x - y\|}\right),
\frac{y - y_n}{\|y - y_n\|} \right\rangle \le \frac{\rho(x,
y_n) -  \rho(x, y)}{\rho(y, y_n)}
 \\
\le \left\langle \gamma \left(\frac{x - y}{\|x - y\|}\right), \frac{y - y_n}{\|y - y_n\|} \right\rangle +
\frac{\eps_{\frac{x - y}{\|x - y\|}}(r_n)}{r_n\|x - y\|}
\left\langle \gamma \left(\frac{x - y}{\|x - y\|}\right), x - y_n \right\rangle.
\end{multline*}
Passing to limit when $n \to \infty$, we get the desired
result.
\end{proof}

For $y\in S_X$, we write $D_y = \{x \in S_X\,:\, \|x + y\| <
2\}$, which is a relatively open subset of $S_X$, and observe
that $D_y$ consists of those points of the sphere for which the
line interval $]x, y[ = \{\lambda x + (1 - \lambda) y \,:\, 0 <
\lambda < 1\}$ lies in the open unit ball. Also observe that
$D_y = \{x \in S_X\,:\, \rho(-y, x) < 2\}$ so, thanks to
Tingley's Theorem \ref{theo:tingley}, $f$ maps bijectively
$D_y$ onto $D_{f(y)}$. We denote by $W_y$ the set of those $x
\in D_y$ for which
$$
\frac{x - y}{\|x - y\|} \in \Sigma(X) \quad \text{ and }
 \quad \frac{f(x) - f(y)}{\|f(x) - f(y)\|} \in \Sigma(E).
$$

\begin{lem}\label{lem 2+}
$D_y \setminus W_y$ is negligible in $D_y$ so, in particular,
$W_y$ is dense in $D_y$.
\end{lem}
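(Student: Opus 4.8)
The plan is to read ``negligible in $D_y$'' as having $(m-1)$-dimensional Hausdorff measure $\mathcal{H}^{m-1}$ equal to zero on the Lipschitz hypersurface $S_X$ (equivalently, Lebesgue-null in every bi-Lipschitz chart). Since nonempty relatively open subsets of $S_X$ carry positive measure, any set whose complement in $D_y$ is negligible must meet every nonempty relatively open subset of $D_y$, so the density of $W_y$ will follow automatically once $D_y\setminus W_y$ is shown to be negligible. Writing $N_X=S_X\setminus\Sigma(X)$ and $N_E=S_E\setminus\Sigma(E)$ for the non-smooth points, I would first invoke the standard fact from convex geometry (Rockafellar \cite{rock}) that a finite convex function on $\R^m$ is differentiable off a Lebesgue-null set; this gives that $N_X$ is negligible in $S_X$ and $N_E$ is negligible in $S_E$. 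Setting $\Phi(x)=\frac{x-y}{\|x-y\|}$ and $\Psi(z)=\frac{z-f(y)}{\|z-f(y)\|}$, one decomposes
$$
D_y\setminus W_y = A_1\cup A_2,\qquad A_1=\{x\in D_y: \Phi(x)\in N_X\},\quad A_2=\{x\in D_y: \Psi(f(x))\in N_E\},
$$
so it suffices to prove that $A_1$ and $A_2$ are each negligible.

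The core is a geometric claim about the \emph{central projection from the boundary point} $y$: the map $\Phi\colon D_y\to S_X$ is injective and locally bi-Lipschitz onto open subsets of $S_X$. Injectivity comes directly from the definition of $D_y$: if two points of $D_y$ lay on a common ray issuing from $y$, then the nearer of them would be an interior point of a segment contained in $B_X$, forcing that segment into a face of $B_X$ and hence onto $S_X$, contradicting $]x,y[\subset\mathrm{int}\,B_X$. For the bi-Lipschitz property, fix $x_0\in D_y$. Because $]x_0,y[\subset\mathrm{int}\,B_X$ while $y,x_0\in S_X$, the line through $y$ and $x_0$ cannot be contained in any supporting hyperplane of $B_X$ at $x_0$ (otherwise the whole open chord would sit in a face, hence on $S_X$); thus the pencil of rays through $y$ crosses $S_X$ transversally at $x_0$. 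This transversality yields, on a small neighbourhood $V\subset D_y$ of $x_0$, a radial function $u\mapsto s(u)$ with $\Phi^{-1}(u)=y+s(u)u$ that is Lipschitz with Lipschitz inverse; verifying this quantitatively is where the only real work lies, and I expect it to be the main obstacle. Covering $D_y$ by countably many such neighbourhoods and using that a locally bi-Lipschitz bijection preserves negligibility in both directions, I conclude that $A_1=\Phi^{-1}(N_X)$ is a countable union of negligible sets, hence negligible.

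For $A_2$ I would transport the problem to $E$. Since $f$ is a surjective isometry for the metric $\rho$, it maps $D_y$ onto $D_{f(y)}$ (as recorded after Theorem \ref{theo:tingley}) and, being an isometry for the chordal metrics that define $\mathcal{H}^{m-1}$ on both spheres, it carries negligible sets to negligible sets and back. Consequently $A_2=f^{-1}\bigl(\{z\in D_{f(y)}:\Psi(z)\in N_E\}\bigr)$ is negligible in $D_y$ if and only if $\{z\in D_{f(y)}:\Psi(z)\in N_E\}$ is negligible in $D_{f(y)}$, and the latter is precisely the statement already proved for $A_1$ with $(X,y,N_X,\Phi)$ replaced by $(E,f(y),N_E,\Psi)$. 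Hence $A_2$ is negligible as well, so $D_y\setminus W_y=A_1\cup A_2$ is negligible and, by the first paragraph, $W_y$ is dense in $D_y$.
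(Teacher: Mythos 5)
Your proof is correct and follows essentially the same route as the paper: decompose $D_y\setminus W_y$ into the two preimages of the non-smooth sets, observe that the central projection $x\mapsto (x-y)/\|x-y\|$ is injective and locally bi-Lipschitz from $D_y$ onto a relatively open subset of $S_X$ (and likewise on $D_{f(y)}$), and use that $f$ is an isometry to transport negligibility between $D_y$ and $D_{f(y)}$. The only difference is one of detail: you make explicit the transversality argument behind the local bi-Lipschitz property, which the paper simply asserts.
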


\begin{proof}
Consider the function $g: D_y \longrightarrow S_X$, $g(x) =
\frac{x - y}{\|x - y\|}$ for every $x\in D_y$. Then, $g$ is
injective, $g(D_y)$ is relatively open, and $g$, as well as
$g^{-1}$ are locally Lipschitz. Since  $S_X \setminus
\Sigma(X)$ is negligible in $S_X$, $g^{-1}(S_X \setminus
\Sigma(X))$ is negligible in $D_y$, i.e.\ the set $\left\{x \in
D_y\,:\, \frac{x - y}{\|x - y\|} \notin \Sigma(X)\right\}$ is
negligible in $D_y$. Analogously, from the fact that $S_E
\setminus \Sigma(E)$ is negligible in $S_E$, we deduce that the
set $\left\{x \in D_y\,:\, \frac{f(x) - f(y)}{\|f(x) - f(y)\|}
\notin \Sigma(E)\right\}$ is negligible in $D_y$. Finally,
$D_y\setminus W_y$ is the union of two negligible sets.
\end{proof}

We say that a subset $A$ of the unit sphere of the dual of a
Banach space $Z$ is \emph{total} if for every $z\in Z$, there
is $z^*\in A$ such that $z^*(z)\neq 0$. The set $A$ is said to
be \emph{1-norming} if $\sup\{|z^*(z)|\,:\, z^*\in A\}=\|z\|$
for every $z\in Z$.

\begin{lem}\label{lem 3}
For every $y \in S_X$ the set
$$
\left\{\gamma\left(\frac{x - y}{\|x - y\|}\right): x \in W_y\right\}
$$
is total over $X$, and
$$
\left\{\gamma\left(\frac{f(x) - f(y)}{\|f(x) - f(y)\|}\right): x \in W_y\right\}
$$
is total over $E$. Moreover, if $y \in \Sigma(X)$ (resp.\ $f(y)
\in \Sigma(E)$), then the corresponding set is 1-norming.
\end{lem}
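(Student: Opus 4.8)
The plan is to give both families a single geometric description and then reduce the two assertions to one statement about the \emph{inward directions} at a boundary point. Recall from the proof of Lemma~\ref{lem 2+} the map $g\colon D_y\to S_X$, $g(x)=\frac{x-y}{\|x-y\|}$. First I would observe that $g(D_y)$ is exactly the set of $u\in S_X$ for which the ray $y+\R_+u$ enters the interior of $B_X$: indeed $x\in D_y$ means $]x,y[\subset\operatorname{int}B_X$, so $g(x)$ points inward, and conversely every inward $u$ is realised by the point where its ray leaves $S_X$. Thus $g(D_y)=V\cap S_X$, where $V=\{v:\langle y^*,v\rangle<0\text{ for all }y^*\in\gimel(y)\}$ is a nonempty open convex cone (nonempty because $\gimel(y)$ is compact, convex and misses $0$); in particular $g(D_y)$ is relatively open and contains the antipode $-y=g(-y)$. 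Since $W_y$ is dense in $D_y$ by Lemma~\ref{lem 2+} and $g$ is a homeomorphism onto its image, $g(W_y)$ is a dense subset of $V\cap S_X$ consisting of smooth points, and $\{\gamma(g(x)):x\in W_y\}$ is the family whose totality I must prove.

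For totality I would argue by contradiction. Suppose some $z\neq0$ satisfies $\langle\gamma(g(x)),z\rangle=0$ for all $x\in W_y$. Since $\langle\gamma(u),z\rangle$ is the derivative of the norm at $u$ in the direction $z$ and $g(W_y)$ is dense in the open set $V\cap S_X$, this forces the norm to be locally insensitive to displacements along $z$ throughout the full-dimensional cone $V$; geometrically, each maximal segment $\{u+tz\}$ through a point $u\in V\cap S_X$ stays on $S_X$, so $V\cap S_X$ is covered by faces of $B_X$ whose supporting hyperplanes are parallel to $z$. The key step is to rule this out: a full-dimensional open convex cone cannot have its spherical trace confined to faces sharing a fixed tangent direction $z$. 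For smooth $y$ this is immediate, since $V\cap S_X=\{v\in S_X:\langle\gamma(y),v\rangle<0\}$ is an open hemisphere and hence meets faces whose normals span $X^*$. For a general $y$ I would use the central symmetry of $B_X$ together with the full dimensionality of $V$ and the presence of $-y$ in $V\cap S_X$: the faces accumulating at $-y$, together with the breadth of the cone $V$, contribute support functionals spanning all of $X^*$. This non-degeneracy is the main obstacle, and it is genuinely needed, since an open patch lying inside a single facet fails to be total; the argument must exploit that $V$ is full-dimensional and $B_X$ symmetric, not merely that $g(D_y)$ is open.

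For the 1-norming claim when $y\in\Sigma(X)$, the cone $V$ is the open half-space $\{v:\langle\gamma(y),v\rangle<0\}$, so its trace is an open hemisphere. Given $z$ with $\|z\|=1$, one of $\pm z$ lies in the closed hemisphere $\overline{V}\cap S_X$ because $\langle\gamma(y),z\rangle$ and $\langle\gamma(y),-z\rangle$ have opposite signs; say $z$ does. Using density of $g(W_y)$, I would pick $x\in W_y$ with $u:=g(x)$ arbitrarily close to $z$ and estimate
\[
\langle\gamma(u),z\rangle=\langle\gamma(u),u\rangle+\langle\gamma(u),z-u\rangle\ge 1-\|z-u\|,
\]
so that $\langle\gamma(u),z\rangle\to1$; combined with $|\langle\gamma(u),z\rangle|\le\|z\|=1$ this gives $\sup_{x\in W_y}|\langle\gamma(g(x)),z\rangle|=\|z\|$ after rescaling. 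Finally, the two statements about $E$ are the two statements about $X$ applied to the pair $(E,f(y))$: by Tingley's Theorem~\ref{theo:tingley} the isometry $f$ maps $D_y$ bijectively onto $D_{f(y)}$, so $\bigl\{\frac{f(x)-f(y)}{\|f(x)-f(y)\|}:x\in D_y\bigr\}$ is precisely the inward trace of $E$ at $f(y)$, while the symmetric definition of $W_y$ makes $\bigl\{\frac{f(x)-f(y)}{\|f(x)-f(y)\|}:x\in W_y\bigr\}$ its dense smooth subset; hence totality, and 1-norming when $f(y)\in\Sigma(E)$, follow verbatim.
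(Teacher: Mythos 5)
Your reduction of both assertions to a single statement about the inward cone $V$ at a boundary point, the identification $g(D_y)=V\cap S_X$, the hemisphere argument for smooth $y$ and for the 1-norming claim, and the transfer to $E$ via $f(D_y)=D_{f(y)}$ are all correct and consistent with what the paper does. The genuine gap is exactly where you yourself locate ``the main obstacle'': totality at a non-smooth $y$. Having reduced matters to showing that the support functionals attained on $V\cap S_X$ separate every $z\neq 0$, you offer only the sentence that the faces accumulating at $-y$, together with the breadth of the cone $V$, contribute support functionals spanning all of $X^*$. That is a restatement of the conclusion, not an argument, and its first half is false on its own: the functionals arising from faces accumulating at $-y$ all lie in $\gimel(-y)$, and $\gimel(-y)$ can annihilate a given $z$ (take $X=\ell_\infty^3$, $y=(1,1,0)$, $z=(0,0,1)$; every element of $\gimel(-y)$ has third coordinate $0$). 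So functionals supported at points of $V\cap S_X$ far from $-y$ are indispensable, and nothing in your text produces one that sees $z$.

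The paper's way around this is a two-dimensional reduction that your proposal is missing. Fix $z\neq 0$ and set $Z=\lin\{y,z\}$. If $y$ is a smooth point of the section $S_Z=S_X\cap Z$, then the open half-circle of $S_Z$ on which the unique support functional of $y$ in $Z^*$ is negative is contained in $g(D_y)$ (the norm of $Z$ is the restriction of that of $X$), and your hemisphere argument run inside $Z$ produces a point $a\in g(D_y)$ and a functional in $\gimel(a)$ (a Hahn--Banach extension of the one found in $Z^*$) not vanishing at $z$. If $y$ is not smooth in $S_Z$, then by central symmetry $a=-y\in g(D_y)$ is not smooth in $S_Z$ either, so it carries two distinct support functionals on $Z$; they agree at $y$, hence differ at $z$, hence at least one is nonzero at $z$, and again a Hahn--Banach extension lies in $\gimel(-y)$. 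This dichotomy is the one genuinely new idea in the lemma, and it is precisely the step your proposal leaves open. (You also use implicitly the inclusion $\cconv\left\{\gamma\left(\frac{x-y}{\|x-y\|}\right):x\in W_y\right\}\supset\bigcup_{a}\gimel(a)$, $a$ ranging over the relative interior of $g(D_y)$ --- i.e.\ upper semicontinuity of the subdifferential --- to pass from the dense set $g(W_y)$ of smooth points to arbitrary support functionals; this should be stated explicitly, as the paper does.)
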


\begin{proof}
Let us start with the ``moreover'' part. If $y$ is a smooth
point of $S_X$, then
$$
\left\{\frac{x - y}{\|x - y\|}\,:\, x
\in D_y\right\} \supset \{z \in S_X\,:\, \langle\gam(y), z\rangle <
0\},
$$
i.e.\ it contains the intersection of the sphere with an open
half-space. This together with the density of $W_y$ in $D_y$
makes the ``moreover'' part evident.

For the main part of the statement, denote by $A$ the relative
interior in $S_X$ of the set $\left\{\frac{x - y}{\|x -
y\|}\,:\, x \in D_y\right\}$. Since $\left\{\frac{x - y}{\|x -
y\|}: x \in W_y\right\}$ is dense in $A$,
$$
\cconv\left\{\gamma\left(\frac{x - y}{\|x - y\|}\right): x \in W_y\right\} \supset \bigcup_{a \in A}\gimel(a).
$$
So it is sufficient to show that for every $z \in X$ there is
$a \in A$ and $x^* \in \gimel(a)$ such that $x^*(z) \neq 0$.
 Consider the two-dimensional subspace $Z \subset X$ spanned by $y$ and $z$.
If $y$ is a smooth point of $S_Z$, then the job is done by the
same reason as in the ``moreover'' part. If $y$ is not a smooth
point of $S_Z$, then $a = -y \in A$ is not a smooth point of
$S_Z$ neither, so at least one of support functionals in this
point $a$ must take a non-zero value at $z$.

The same argument works for the set $
\left\{\gamma\left(\frac{f(x) - f(y)}{\|f(x) -
f(y)\|}\right)\,:\, x \in W_y\right\}$.
\end{proof}

\begin{lem}\label{lem 4}
For every $y \in S_X$ and for every sequence $(y_n)$ on $S_X$
converging to $y$, if the sequence $\left(\frac{y - y_n}{\|y -
y_n\|}\right)$ is convergent, then so is the sequence
$\left(\frac{f(y) - f(y_n)}{\|f(y) - f(y_n)\|}\right)$.
Moreover, for every $x \in W_y$
\begin{equation} \label{lem 4 eq}\textstyle
 \left\langle \gamma \left(\frac{x - y}{\|x - y\|}\right),
\lim\limits_{n \to \infty}\frac{y - y_n}{\|y - y_n\|}
\right\rangle
 = \left\langle \gamma \left(\frac{f(x) - f(y)}{\|f(x) -
f(y)\|}\right),
 \lim\limits_{n \to \infty}\frac{f(y) - f(y_n)}{\|f(y) - f(y_n)\|} \right\rangle
\end{equation}
\end{lem}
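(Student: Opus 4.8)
The plan is to exploit the fact that $f$ is an isometry, so that the difference quotient appearing in Lemma~\ref{lem 1} is literally the same whether computed in $X$ or in $E$. Fix $x \in W_y$ and set $u = \lim_n \frac{y - y_n}{\|y - y_n\|}$. Since $f$ preserves distances, $\rho(x, y_n) = \rho(f(x), f(y_n))$, $\rho(x, y) = \rho(f(x), f(y))$ and $\rho(y, y_n) = \rho(f(y), f(y_n))$, whence
\begin{equation}\label{eq:plan-quot}
\frac{\rho(x, y_n) - \rho(x, y)}{\rho(y, y_n)} = \frac{\rho(f(x), f(y_n)) - \rho(f(x), f(y))}{\rho(f(y), f(y_n))}.
\end{equation}
Because $x \in W_y$, we have $\frac{x - y}{\|x - y\|} \in \Sigma(X)$, so Lemma~\ref{lem 1} applies in $X$ and the left-hand side of \eqref{eq:plan-quot} converges to $\bigl\langle \gamma(\frac{x-y}{\|x-y\|}), u\bigr\rangle$. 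I would like to apply Lemma~\ref{lem 1} in $E$ to the right-hand side as well---note that $\frac{f(x)-f(y)}{\|f(x)-f(y)\|} \in \Sigma(E)$ by the definition of $W_y$, that $f(x)\neq f(y)$ since $x\in D_y$, and that $f(y_n) \to f(y)$ because $f$ is an isometry---but this requires knowing that $\frac{f(y)-f(y_n)}{\|f(y)-f(y_n)\|}$ converges, which is precisely the first assertion of the lemma. This circularity is the main obstacle, and it has to be broken by a bootstrap argument.

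To prove convergence I would argue by compactness. Since $E$ is finite-dimensional, $S_E$ is compact, so the sequence $\bigl(\frac{f(y)-f(y_n)}{\|f(y)-f(y_n)\|}\bigr)$ has convergent subsequences, and it suffices to show that any two of them share the same limit. Suppose then that two subsequences converge to $v$ and $v'$ respectively. Along each subsequence Lemma~\ref{lem 1} \emph{does} apply in $E$, and combining it with \eqref{eq:plan-quot} and the already-established limit in $X$ yields, for every $x \in W_y$,
\begin{equation}\label{eq:plan-id}
\Bigl\langle \gamma\bigl(\tfrac{f(x)-f(y)}{\|f(x)-f(y)\|}\bigr), v\Bigr\rangle = \Bigl\langle \gamma\bigl(\tfrac{x-y}{\|x-y\|}\bigr), u\Bigr\rangle = \Bigl\langle \gamma\bigl(\tfrac{f(x)-f(y)}{\|f(x)-f(y)\|}\bigr), v'\Bigr\rangle.
\end{equation}
Hence $\bigl\langle \gamma(\frac{f(x)-f(y)}{\|f(x)-f(y)\|}), v - v'\bigr\rangle = 0$ for all $x \in W_y$. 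By Lemma~\ref{lem 3} the family $\bigl\{\gamma(\frac{f(x)-f(y)}{\|f(x)-f(y)\|}) : x \in W_y\bigr\}$ is total over $E$, which forces $v = v'$. Thus all subsequential limits coincide and, by compactness, the whole sequence $\frac{f(y)-f(y_n)}{\|f(y)-f(y_n)\|}$ converges, establishing the first assertion.

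Finally, writing $v = \lim_n \frac{f(y)-f(y_n)}{\|f(y)-f(y_n)\|}$ for this common limit, the first equality in \eqref{eq:plan-id}---now valid for the full sequence---is exactly the claimed identity \eqref{lem 4 eq}, which completes the proof. The only genuinely delicate point is the circularity noted above: Lemma~\ref{lem 1} cannot be invoked in $E$ before convergence is known, and it is precisely the totality statement of Lemma~\ref{lem 3} that resolves this by pinning down the subsequential limit uniquely.
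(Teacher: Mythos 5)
Your proposal is correct and follows essentially the same route as the paper: assume (or extract by compactness) a subsequential limit, apply Lemma~\ref{lem 1} on both sides together with the isometry identity for the difference quotients, and then use the totality statement of Lemma~\ref{lem 3} to show all subsequential limits coincide. The paper's proof is exactly this argument, with the compactness of $S_E$ left implicit.
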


\begin{proof}
Denote $u = \lim\limits_{n \to \infty}\frac{y - y_n}{\|y -
y_n\|}$. Assume at first that $\lim\limits_{n \to
\infty}\frac{f(y) - f(y_n)}{\|f(y) - f(y_n)\|}$ exists, and
denote it $v$. Then, according to Lemma~\ref{lem 1}, we have
\begin{align*}
\left\langle \gamma \left(\frac{x - y}{\|x - y\|}\right), u \right\rangle &=
\lim_{n \to \infty}\frac{\rho(x, y_n) -  \rho(x, y)}{\rho(y, y_n)}
\\ &
= \lim_{n \to \infty}\frac{\rho(f(x), f(y_n)) -  \rho(f(x), f(y))}{\rho(f(y), f(y_n))} \\ & =
\left\langle \gamma \left(\frac{f(x) - f(y)}{\|f(x) - f(y)\|}\right), v \right\rangle.
\end{align*}
This proves \eqref{lem 4 eq}. Now, assume that $v_1, v_2$ are
limits of some subsequences of the sequence $\left(\frac{f(y) -
f(y_n)}{\|f(y) - f(y_n)\|}\right)$. Applying for these
subsequences the already proved condition \eqref{lem 4 eq} we
get that
$$
\left\langle \gamma \left(\frac{f(x) - f(y)}{\|f(x) - f(y)\|}\right), v_1 \right\rangle =
\left\langle \gamma \left(\frac{f(x) - f(y)}{\|f(x) - f(y)\|}\right), v_2 \right\rangle
$$
for all $x \in W_y$. By Lemma~\ref{lem 3} this means that $v_1
= v_2$.
\end{proof}

For every $y \in S_X$, we write $\Lambda_y$ to denote the set
of all limiting points of the expression
$$
\frac{y - z}{\|y - z\|}
$$
when $z \to y$, $z \in S_X$ ($\Lambda_y$ is the set of tangent
directions) and we observe that
\begin{enumerate}
\item[(a)] if $y \in \Sigma(X)$, then $\Lambda_y = S_{\ker
    \gamma(y)}$, i.e.\ it is the unit sphere of a
    hyperplane,
\item[(b)] otherwise, $\Lambda_y$ is the intersection of
    the unit sphere with the boundary of the supporting
    cone $\{u \in X\,:\, x^*(u) \ge 0 \ \forall x^* \in
    \gimel(y)\}$ and, in particular, $\lin \Lambda_y=X$.
\end{enumerate}
Let us also observe that Lemma~\ref{lem 4} means that the
correspondence
$$
\lim_{n \to \infty}\frac{y - y_n}{\|y - y_n\|} \longrightarrow \lim_{n \to \infty}\frac{f(y) - f(y_n)}{\|f(y) - f(y_n)\|}
$$
defines a bijective map between $\Lambda_y$ and
$\Lambda_{f(y)}$. We write $F_y: \Lambda_y \longrightarrow
\Lambda_{f(y)}$ for this map. With this notation we can rewrite
\eqref{lem 4 eq} as follows: for every $x \in W_y$, $u \in
\Lambda_y$
\begin{equation} \label{lem 4 eq+}
 \left\langle \gamma \left(\frac{x - y}{\|x - y\|}\right), u \right\rangle
 = \left\langle \gamma \left(\frac{f(x) - f(y)}{\|f(x) - f(y)\|}\right), F_y(u) \right\rangle
\end{equation}

\begin{lem}\label{lem 5}
The map $F_y$ extends to a linear isomorphism between $\lin
\Lambda_y$ and $\lin \Lambda_{f(y)}$ (we will denote this
extension again by $F_y$). Moreover, if $y \in \Sigma(X)$ and
$f(y) \in \Sigma(E)$, then this linear isomorphism is an
isometry.
\end{lem}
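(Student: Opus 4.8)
The plan is to leverage the intertwining identity \eqref{lem 4 eq+} together with the totality and norming conclusions of Lemma~\ref{lem 3}. To lighten notation, for $x\in W_y$ put $\fhi_x=\gam\!\bigl(\frac{x-y}{\|x-y\|}\bigr)\in X^*$ and $\psi_x=\gam\!\bigl(\frac{f(x)-f(y)}{\|f(x)-f(y)\|}\bigr)\in E^*$, so that \eqref{lem 4 eq+} reads $\langle\fhi_x,u\rangle=\langle\psi_x,F_y(u)\rangle$ for all $u\in\Lambda_y$ and $x\in W_y$, while Lemma~\ref{lem 3} says that $\{\fhi_x:x\in W_y\}$ is total over $X$ and $\{\psi_x:x\in W_y\}$ is total over $E$ (and, under the smoothness hypotheses, even $1$-norming). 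The guiding idea is that an a priori merely set-theoretic bijection which intertwines two total families of functionals is forced to be the restriction of a linear map.

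First I would pick a basis $u_1,\dots,u_k$ of $\lin\Lambda_y$ consisting of elements of $\Lambda_y$ (possible since $\Lambda_y$ spans its linear hull) and define $\widehat{F}_y\colon\lin\Lambda_y\to E$ to be the unique linear map with $\widehat{F}_y(u_i)=F_y(u_i)$. Because both sides of the proposed identity are linear in $w$ and agree on the basis vectors $u_i$ by \eqref{lem 4 eq+}, it propagates to the key identity
\begin{equation*}
\langle\psi_x,\widehat{F}_y(w)\rangle=\langle\fhi_x,w\rangle\qquad\text{for all }w\in\lin\Lambda_y,\ x\in W_y. \tag{$\star$}
\end{equation*}

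Next I would verify that $\widehat{F}_y$ truly extends $F_y$: for $u\in\Lambda_y$, comparing $(\star)$ with \eqref{lem 4 eq+} gives $\langle\psi_x,\widehat{F}_y(u)-F_y(u)\rangle=0$ for every $x\in W_y$, so totality of $\{\psi_x\}$ over $E$ forces $\widehat{F}_y(u)=F_y(u)$. The same identity yields injectivity, since $\widehat{F}_y(w)=0$ makes the left-hand side of $(\star)$ vanish, whence $\langle\fhi_x,w\rangle=0$ for all $x$ and $w=0$ by totality of $\{\fhi_x\}$ over $X$. Surjectivity onto $\lin\Lambda_{f(y)}$ is automatic because $\widehat{F}_y(\lin\Lambda_y)=\lin F_y(\Lambda_y)=\lin\Lambda_{f(y)}$, using that $F_y$ carries $\Lambda_y$ onto $\Lambda_{f(y)}$; thus $\widehat{F}_y$ is the desired linear isomorphism. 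For the ``moreover'' part, if $y\in\Sigma(X)$ and $f(y)\in\Sigma(E)$ then the two families are $1$-norming, and for $w\in\lin\Lambda_y$ I would compute
\begin{equation*}
\|\widehat{F}_y(w)\|=\sup_{x\in W_y}\bigl|\langle\psi_x,\widehat{F}_y(w)\rangle\bigr|=\sup_{x\in W_y}\bigl|\langle\fhi_x,w\rangle\bigr|=\|w\|,
\end{equation*}
where the outer equalities are the $1$-norming property over $E$ and over $X$ and the middle one is $(\star)$; hence $\widehat{F}_y$ is an isometry.

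I expect the only delicate point to be the propagation step producing $(\star)$ on the whole span rather than just on $\Lambda_y$: one must observe that fixing $\widehat{F}_y$ on a basis of $\lin\Lambda_y$ automatically makes $(\star)$ hold everywhere by linearity, and that the consistency of this extension with $F_y$ across all of $\Lambda_y$ --- not merely on the chosen basis --- is exactly what totality over $E$ guarantees. Once $(\star)$ is in hand, injectivity, surjectivity and the isometry statement are immediate readings-off of the total and norming properties supplied by Lemma~\ref{lem 3}.
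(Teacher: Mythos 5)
Your proof is correct and rests on the same two ingredients as the paper's own argument: the intertwining identity \eqref{lem 4 eq+} and the totality/$1$-norming conclusions of Lemma~\ref{lem 3}. The only difference is packaging --- the paper establishes well-definedness and bicontinuity of the linear extension in one stroke via the estimate $\left\|\sum_j a_jF_y(v_j)\right\|\le C\left\|\sum_j a_jv_j\right\|$ (and its reverse, obtained by interchanging the roles of $X$ and $E$), whereas you fix the extension on a basis and deduce consistency, injectivity and surjectivity from totality; both routes are valid and essentially the same.
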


\begin{proof}
Let $v_1, \ldots, v_N \in \Lambda_y$ and $a_1, \ldots , a_N \in
\R$. By Lemma~\ref{lem 3}, the set
$$
\left\{\gamma\left(\frac{f(x) - f(y)}{\|f(x) - f(y)\|}\right): x \in W_y\right\}
$$
is total over $E$. Since $\dim E < \iy$, this set of
functionals is norming with some constant $C>0$. Therefore,
\begin{align*}
\left\|\sum_{j=1}^N a_j F_y(v_j)\right\| & \le C \sup\left\{\left|\left\langle \gamma\left(\frac{f(x) - f(y)}{\|f(x) - f(y)\|}\right),
\sum_{j=1}^N a_j F_y(v_j) \right\rangle\right| : x \in W_y\right\}
\\ &
= C \sup\left\{\left|\sum_{j=1}^N a_j \left \langle \gamma\left(\frac{f(x) - f(y)}{\|f(x) - f(y)\|}\right),
 F_y(v_j) \right\rangle\right|: x \in W_y \right\}
\\ &
= C \sup\left\{\left|\sum_{j=1}^N a_j\left \langle \gamma\left(\frac{x - y}{\|x - y\|}\right),
 v_j \right\rangle\right|: x \in W_y \right\}
\\ &
= C \sup\left\{\left|\left \langle \gamma\left(\frac{x - y}{\|x - y\|}\right),
\sum_{j=1}^N a_j v_j \right\rangle\right|: x \in W_y\right\}
\\ &
\le C\, \left\|\sum_{j=1}^N a_j v_j\right\|.
\end{align*}
This demonstrates the possibility of a linear extension and we
may interchange the rolles of $X$ and $E$ to get the reversed
inequality and so an isomorphism. The ``moreover'' part follows
from the ``moreover'' part of Lemma~\ref{lem 3} since, in such
a case, $C=1$.
\end{proof}

The next goal is to study what happens with the supporting
functionals in a non-smooth point $y \in S_X$.

\begin{lem}\label{lem 6}
Let $(x_n)$ be a sequence in $W_y$ such that $(x_n)
\longrightarrow -y$. Assume that $\gamma(\frac{x_n - y}{\|x_n -
y\|}) \longrightarrow y^* \in \gimel(-y)$. Then, there exists
$e^*:=\lim\limits_{n\to \infty} \gamma\left(\frac{f(x_n) -
f(y)}{\|f(x_n) - f(y)\|}\right)$ and
\begin{equation} \label{lem dual eq}
  \left\langle y^*, u \right\rangle
 = \left\langle e^*, F_y(u) \right\rangle
\end{equation}
for every $u \in \Lambda_y$.
\end{lem}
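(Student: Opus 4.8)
The plan is to identify the candidate limit functionals by compactness, show each of them satisfies \eqref{lem dual eq} by passing to the limit in \eqref{lem 4 eq+}, and then force uniqueness of the subsequential limit by combining that identity with the normalization coming from membership in $\gimel(-f(y))$. First I would record the basic convergences. Since $f$ is an isometry it is continuous, and by Tingley's Theorem~\ref{theo:tingley} we have $f(-y)=-f(y)$, whence $f(x_n)\to -f(y)$. Writing $z_n=\frac{f(x_n)-f(y)}{\|f(x_n)-f(y)\|}$, one computes $f(x_n)-f(y)\to -2f(y)$ and $\|f(x_n)-f(y)\|\to 2$, so $z_n\to -f(y)$. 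Because $x_n\in W_y$ we have $z_n\in\Sigma(E)$, so $\gamma(z_n)$ is well defined and lies in the (compact) unit sphere of $E^*$.

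Let $e^*$ be any subsequential limit of $(\gamma(z_n))$. From $\|\gamma(z_n)\|=\langle\gamma(z_n),z_n\rangle=1$ together with $z_n\to -f(y)$ and the joint continuity of the duality pairing in finite dimensions, I get $\|e^*\|=1$ and $\langle e^*,-f(y)\rangle=1$, that is, $e^*\in\gimel(-f(y))$; in particular $\langle e^*,f(y)\rangle=-1$. Now apply \eqref{lem 4 eq+} with $x=x_n$: for every $u\in\Lambda_y$,
\[
\left\langle \gamma\!\left(\frac{x_n-y}{\|x_n-y\|}\right),\,u\right\rangle
=\bigl\langle \gamma(z_n),\,F_y(u)\bigr\rangle .
\]
Passing to the limit along the chosen subsequence and using the hypothesis $\gamma\!\left(\frac{x_n-y}{\|x_n-y\|}\right)\to y^*$ yields $\langle y^*,u\rangle=\langle e^*,F_y(u)\rangle$, which is exactly \eqref{lem dual eq}. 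Thus every subsequential limit already satisfies the required identity.

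It remains to prove the limit is unique, so that the full sequence converges. Suppose $e_1^*,e_2^*$ are two subsequential limits. By the previous step both satisfy \eqref{lem dual eq}, so $e_1^*-e_2^*$ vanishes on $F_y(u)$ for all $u\in\Lambda_y$; since $F_y$ maps $\Lambda_y$ onto $\Lambda_{f(y)}$, it vanishes on $\lin\Lambda_{f(y)}$. Both also lie in $\gimel(-f(y))$, so $\langle e_1^*-e_2^*,f(y)\rangle=(-1)-(-1)=0$. Hence $e_1^*-e_2^*$ annihilates $\lin\bigl(\Lambda_{f(y)}\cup\{f(y)\}\bigr)$, which is all of $E$: by observation (b), if $f(y)\notin\Sigma(E)$ then already $\lin\Lambda_{f(y)}=E$, while by observation (a), if $f(y)\in\Sigma(E)$ then $\lin\Lambda_{f(y)}=\ker\gamma(f(y))$ is a hyperplane and $f(y)\notin\ker\gamma(f(y))$, so adjoining $f(y)$ recovers $E$. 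Therefore $e_1^*=e_2^*$, the sequence $\bigl(\gamma(z_n)\bigr)$ converges to a single $e^*$, and this $e^*$ satisfies \eqref{lem dual eq}.

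The main obstacle is precisely this uniqueness step. Relation \eqref{lem 4 eq+} determines $e^*$ only on $\lin\Lambda_{f(y)}$, which can be a proper hyperplane exactly when $f(y)$ is a smooth point; then the identity alone does not pin down $e^*$. The extra normalization $\langle e^*,f(y)\rangle=-1$, forced by $e^*\in\gimel(-f(y))$, supplies the missing direction, and the geometric fact that the tangent directions $\Lambda_{f(y)}$ together with $f(y)$ span $E$ is what closes the gap.
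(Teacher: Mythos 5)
Your proof is correct and follows essentially the same route as the paper's: establish \eqref{lem dual eq} for any subsequential limit by passing to the limit in \eqref{lem 4 eq+}, then deduce uniqueness from the fact that $e_1^*-e_2^*$ annihilates $\Lambda_{f(y)}\cup\{f(y)\}$, whose linear span is all of $E$. You merely spell out in more detail the compactness argument and the case analysis showing that $\lin\bigl(\Lambda_{f(y)}\cup\{f(y)\}\bigr)=E$, which the paper leaves implicit.
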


\begin{proof}
Denote $z_n = \frac{f(x_n) - f(y)}{\|f(x_n) - f(y)\|}$. At
first assume that $\lim\limits_{n\to \infty} \gamma(z_n) =:
e^*$ exists, then \eqref{lem dual eq} is just a limiting case
of \eqref{lem 4 eq+}. Now suppose that $e_1^*$ and $e_2^*$ are
limits of some subsequences of $\bigl(\gamma(z_n)\bigr)$. Then
\eqref{lem dual eq} is valid for both $e_1^*, e_2^*$, so for
every $u \in \Lambda_y$
$$
\left\langle e_1^* - e_2^*, F_y(u) \right\rangle = 0.
$$
Also, evidently, $e_1^*(f(y)) = e_2^*(f(y)) =-1$, so $e_1^* -
e_2^* \in \bigl[\Lambda_{f(y)} \cup \{f(y)\}\bigr]^\bot =
\{0\}$.
\end{proof}

Denote by $M_y^*$ the set of elements in $S_{X^*}$ of the form
$\lim\limits_{n\to\infty}\gamma(\frac{x_n - y}{\|x_n - y\|})$,
where $(x_n)$ is a sequence in $W_y$ converging to $-y$ and
observe that $M_y^*\subset \gimel(-y)$. We write
$M_{f(y)}^*\subset \gimel(-f(y))$  for the set of elements in
$S_{E^*}$ of the form
$\lim\limits_{n\to\infty}\gamma(\frac{f(x_n) - f(y)}{\|f(x_n) -
f(y)\|})$, where $(x_n)$ is a sequence in $W_y$ converging to
$-y$. Equivalently, $M_{f(y)}^*$ is the set of elements in
$S_{E^*}$ of the form $\lim\limits_{n\to\infty}\gamma(\frac{z_n
- f(y)}{\|z_n - f(y)\|})$, where $(z_n)$ is a sequence in
$S_{E}$ converging to $-f(y)$ such that $\|z_n-f(y)\|<2$,
$z_n\in \Sigma(E)$ and $f^{-1}(z_n)\in \Sigma(X)$.

In the same way as in the definition of $F_y$, we can now
define a bijective map $G_y: M_y^* \longrightarrow M_{f(y)}^*$
by
$$
G_y\left(\lim_{n\to\infty}\gamma\left(\frac{x_n - y}{\|x_n - y\|}\right)\right)
:= \lim_{n\to\infty}\gamma\left(\frac{f(x_n) - f(y)}{\|f(x_n) - f(y)\|}\right).
$$
Then \eqref{lem dual eq} can be re-written as
\begin{equation} \label{lem dual eq+}
  \left\langle y^*, u \right\rangle
 = \left\langle G_y(y^*), F_y(u) \right\rangle
\end{equation}
for every $y^* \in  M_y^*$ and for every $u \in \lin
\Lambda_y$. Now, as in Lemma~\ref{lem 5} and taking into
account that the closed convex hull of $M_y^*$ equals to
$\gimel(-y) = - \gimel(y)$, we can deduce the following.

\begin{lem}\label{lem 5++}
$G_y$ extends to a linear isomorphism between $\lin \gimel(y)$
and $\lin \gimel(f(y))$ (we will denote this extension again as
$G_y$) satisfying that $G_y(\gimel(y))=\gimel(f(y))$ and
$$
\left\langle y^*, u \right\rangle = \left\langle G_y y^*, F_y u \right\rangle
$$
for all $y^* \in \lin \gimel(y)$, $u \in \lin \Lambda_y$.
Therefore, $\dim \lin \gimel(y) = \dim \lin \gimel(f(y))$ and,
in particular, $f$ maps smooth points into smooth points.
\end{lem}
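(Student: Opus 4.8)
The plan is to mimic the proof of Lemma~\ref{lem 5}, but now working in the dual spaces with the functionals $M_y^*$ and $M_{f(y)}^*$ playing the role that $\Lambda_y$ and $\Lambda_{f(y)}$ played before, and using $\Lambda_y$, $\Lambda_{f(y)}$ as the totalizing/norming family. The duality formula~\eqref{lem dual eq+} is the exact analogue of~\eqref{lem 4 eq+}, so the same norming argument should transfer verbatim.

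Let me spell out the key steps. First I would establish that the family $\Lambda_{f(y)}$ is total (indeed 1-norming, after closed convex hulling) over $\lin\gimel(f(y))$ in the appropriate sense. The crucial ingredient here is the hint already given in the statement: the closed convex hull of $M_y^*$ equals $\gimel(-y)=-\gimel(y)$. I expect this to follow because the support functionals at $-y$ are exactly the weak-star limits of support functionals at nearby smooth points, and the set $W_y$ of such nearby smooth points is dense (Lemma~\ref{lem 2+}), so their limits recover all of $\gimel(-y)$ by a standard convexity/approximation argument in finite dimensions. Once this is in hand, I would take finitely many $y_1^*,\dots,y_N^*\in M_y^*$ and scalars $a_1,\dots,a_N\in\R$, and estimate $\bigl\|\sum_j a_j G_y(y_j^*)\bigr\|$ by testing against the $1$-norming family $\Lambda_{f(y)}$. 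Using~\eqref{lem dual eq+} to swap each inner product $\langle G_y(y_j^*),F_y(u)\rangle$ for $\langle y_j^*,u\rangle$, then pulling the sum back inside and testing against the corresponding norming family $\Lambda_y$ on the $X$-side, I would obtain
\begin{equation*}
\textstyle
\Bigl\|\sum_{j=1}^N a_j G_y(y_j^*)\Bigr\| \le \Bigl\|\sum_{j=1}^N a_j y_j^*\Bigr\|,
\end{equation*}
which shows $G_y$ is well-defined and extends linearly to $\lin M_y^*=\lin\gimel(y)$. Interchanging the roles of $X$ and $E$ gives the reverse inequality, hence a linear \emph{isometry} between $\lin\gimel(y)$ and $\lin\gimel(f(y))$, and in particular $G_y(\gimel(y))=\gimel(f(y))$, since $G_y$ carries the extreme-point data of one closed convex hull onto the other.

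The hardest part, I expect, will be cleanly justifying that $F_y$ is the correct ``adjoint'' object through which the norming estimate passes, that is, making the interchange between the $X$-side and $E$-side norming families rigorous using~\eqref{lem dual eq+} with $u$ ranging over all of $\lin\Lambda_y$ rather than merely over $\Lambda_y$. This requires that $F_y$ has already been upgraded to the linear isomorphism of Lemma~\ref{lem 5} (so that $\lin\Lambda_y$ is a genuine norming domain and $F_y(u)$ makes sense for all $u$), and that $\Lambda_y$ norms $\gimel(y)$ with constant $1$ in the pairing. The final clause, that $f$ maps smooth points to smooth points, is then immediate: the identity $\dim\lin\gimel(y)=\dim\lin\gimel(f(y))$ forced by the isomorphism shows that $\gimel(y)$ is a singleton exactly when $\gimel(f(y))$ is, i.e.\ $y\in\Sigma(X)$ iff $f(y)\in\Sigma(E)$.
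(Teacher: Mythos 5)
Your overall strategy --- dualizing the norming argument of Lemma~\ref{lem 5} via \eqref{lem dual eq+} and closing with the convex-hull identity $\cconv M_y^*=\gimel(-y)$ --- is the same as the paper's, but two of your concrete claims fail. First, $\Lambda_{f(y)}$ is \emph{not} $1$-norming, and in general not even total over $E$: if $f(y)\in\Sigma(E)$ then $\lin\Lambda_{f(y)}=\ker\gamma(f(y))$ is a proper hyperplane, and even when $\lin\Lambda_{f(y)}=E$ the set $\Lambda_{f(y)}$ lies in the boundary of the supporting cone at $f(y)$, whose convex hull is far from all of $B_E$. For instance, at the vertex $y=(1,0)$ of $B_{\ell_1^2}$ one has $\Lambda_y=\{(1/2,1/2),(1/2,-1/2)\}$, and the functional $(1,0)\in\conv\gimel(y)$ has norm $1$ but supremum $1/2$ on $\Lambda_y$. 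So the inequality $\bigl\|\sum_j a_jG_y(v_j^*)\bigr\|\le\bigl\|\sum_j a_jv_j^*\bigr\|$ and the conclusion that $G_y$ is an \emph{isometry} are unjustified; the lemma claims, and the argument can only deliver, an isomorphism with some constant $C$ coming from totality plus finite-dimensionality.

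Second, and more substantively, testing only against $F_y(u)$ with $u\in\Lambda_y$ does not control the component of $\sum_j a_jG_y(v_j^*)$ transverse to $\lin\Lambda_{f(y)}$. The paper's norming family is $\Lambda_{f(y)}\cup\{f(y)\}$, which always spans $E$, and its estimate uses the additional identity $\langle G_y(v_j^*),f(y)\rangle=\langle v_j^*,y\rangle=-1$, which is \emph{not} contained in \eqref{lem dual eq+} and has to be recorded separately (it comes from $M_y^*\subset\gimel(-y)$ and $M_{f(y)}^*\subset\gimel(-f(y))$). Your write-up omits both the extra test vector and the extra identity. The version you describe can be rescued by a case split (for $y\notin\Sigma(X)$ Lemma~\ref{lem 5} gives $\lin\Lambda_{f(y)}=E$, so $\Lambda_{f(y)}$ alone is total; for $y\in\Sigma(X)$ the set $M_y^*$ is a singleton and well-definedness is trivial), but as written the step ``test against the $1$-norming family $\Lambda_{f(y)}$'' is a genuine gap. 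The remaining ingredients --- the convex-hull identity, the symmetry between $X$ and $E$, and deducing $G_y(\gimel(y))=\gimel(f(y))$ and the smooth-point statement from the dimension count --- do match the paper.
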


\begin{proof}
Recall first that outside of \eqref{lem dual eq+} we know that
$\left\langle y^*, y \right\rangle   = \left\langle G_y(y^*),
f(y) \right\rangle = -1$ for every $y^* \in  M_y^*$. Let
$v_1^*, \ldots, v_N^* \in M_y^*$, $a_1, \ldots , a_N \in \R$.
The set $\Lambda_{f(y)} \cup \{f(y)\}$ spans all the E, which
means, thanks to the finite-dimensionality of $E$, that this
set is norming for $E^*$ with some constant $C>0$. So, writing
$\vee$ to denote the maximum of two numbers, we have
\begin{align*}
 & \textstyle \hspace*{-0.5cm} \left\|\sum\limits_{j=1}^N a_j G_y(v_j^*)\right\| \\ & \textstyle \le
C \left(\sup\left\{\left|\left\langle \sum\limits_{j=1}^N a_j G_y(v_j^*),
 F_y(u) \right\rangle\right| : u \in \Lambda_{y}\right\}  {\bigvee}
\left|\left\langle \sum\limits_{j=1}^N a_j G_y(v_j^*),  f(y) \right\rangle\right|\right)
\\ & \textstyle
= C \left(\sup\left\{\left|\sum\limits_{j=1}^N a_j\left\langle  G_y(v_j^*),
F_y(u) \right\rangle\right| : u \in \Lambda_{y}\right\}  {\bigvee}
\left|\sum\limits_{j=1}^N a_j\left\langle  G_y(v_j^*),  f(y)y \right\rangle\right|\right)
\\ & \textstyle
= C \left(\sup\left\{\left|\sum\limits_{j=1}^N a_j\left\langle  v_j^*,
u \right\rangle\right| : u \in \Lambda_{y}\right\}  {\bigvee}
\left|\sum\limits_{j=1}^N a_j\left\langle v_j^*,  y
\right\rangle\right|\right)
\\ & \textstyle
= C \left(\sup\left\{\left|\left\langle  \sum\limits_{j=1}^N a_jv_j^*,
u \right\rangle\right| : u \in \Lambda_{y}\right\}
{\bigvee} \left|\left\langle \sum\limits_{j=1}^N a_jv_j^*,  y \right\rangle\right|\right)
\\ & \textstyle
\le C \left\|\sum\limits_{j=1}^N a_jv_j^*\right\|.
\end{align*}
This demonstrates the possibility of linear extension.
\end{proof}

\section{The main results}

Recall that $F$ stands for the homogeneous extension of $f$,
see \eqref{hom-extension}. We denote
$$
[F'(y)](z) = \lim_{a \to 0^+}\frac1a \left(F(y + a z) - F(y)\right)
$$
the derivative of $F$ at point $y$ in direction $z$. This is
just the first step in the definition of the Gateaux
differential: $F$ is Gateaux differentiable if $[F'(y)](z)$
depends on $z$ linearly and continuously. In the
finite-dimensional case, continuity follows from linearity. We
also denote $H(y,z) \subset \gimel(y)$ the set of all $y^* \in
\gimel(y)$ such that
\begin{equation} \label{y*z}
\lim_{a \to 0^+} \frac1a\left(\|y + az\| - 1\right)= y^*(z)
\end{equation}
and observe that $H(y,z) \neq \emptyset$ by the convexity of
the norm.

\begin{lem}\label{lem 7}
For every $y \in S_X$, $z \in X$, $y^* \in H(y,z)$, we have
$z-\langle y^*,z\rangle y \in \lin \Lambda_y$ and
$$
[F'(y)](z) = \langle y^*, z\rangle f(y) + F_y\left(z - \langle y^*, z\rangle y\right).
$$
\end{lem}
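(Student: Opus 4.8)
The plan is to reduce $[F'(y)](z)$ to two limits by means of the homogeneity of $F$ and then to identify each piece using Lemmas~\ref{lem 4} and~\ref{lem 5}. Write $s=\langle y^*,z\rangle$ and $w=z-sy$. For small $a>0$ we have $y+az\neq 0$, so set $y_a=\frac{y+az}{\|y+az\|}\in S_X$; then $F(y+az)=\|y+az\|\,f(y_a)$ while $F(y)=f(y)$, and
\begin{equation*}
\frac1a\bigl(F(y+az)-F(y)\bigr)=\|y+az\|\,\frac{f(y_a)-f(y)}{a}+\frac{\|y+az\|-1}{a}\,f(y).
\end{equation*}
Since $y^*\in H(y,z)$, the factor $\frac{\|y+az\|-1}{a}$ tends to $s$, so the second summand converges to $s\,f(y)=\langle y^*,z\rangle f(y)$, which is the first term of the claimed formula. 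It remains to prove that the first summand tends to $F_y(w)$.

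First I would check that $w\in\lin\Lambda_y$, so that $F_y(w)$ is meaningful. If $y\notin\Sigma(X)$ this is immediate because $\lin\Lambda_y=X$ by observation (b). If $y\in\Sigma(X)$ then $y^*=\gamma(y)$, hence $\langle\gamma(y),w\rangle=s-s\langle\gamma(y),y\rangle=0$, so $w\in\ker\gamma(y)=\lin\Lambda_y$ by observation (a). Next, from
\begin{equation*}
y_a-y=\frac{az-(\|y+az\|-1)\,y}{\|y+az\|}
\end{equation*}
together with $\frac{\|y+az\|-1}{a}\to s$ and $\|y+az\|\to1$, I obtain $\frac{y_a-y}{a}\to z-sy=w$.

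If $w=0$, then $z=sy$ and homogeneity gives $F(y+az)=(1+as)f(y)$ for small $a$, whence $[F'(y)](z)=s\,f(y)=\langle y^*,z\rangle f(y)+F_y(0)$, which is the assertion. Assume now $w\neq0$. Then $\frac{y-y_a}{\|y-y_a\|}\to -\frac{w}{\|w\|}=:u\in\Lambda_y$ and $\frac{\|y_a-y\|}{a}\to\|w\|$. Applying Lemma~\ref{lem 4} along any sequence $a_n\to0^+$ (its hypotheses hold with $y_n=y_{a_n}$), the vectors $\frac{f(y)-f(y_a)}{\|f(y)-f(y_a)\|}$ converge to $F_y(u)$; as the limiting direction $u$ does not depend on the chosen sequence, this is a genuine limit as $a\to0^+$. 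Writing
\begin{equation*}
\frac{f(y_a)-f(y)}{a}=-\frac{f(y)-f(y_a)}{\|f(y)-f(y_a)\|}\cdot\frac{\|f(y_a)-f(y)\|}{a}
\end{equation*}
and using $\|f(y_a)-f(y)\|=\|y_a-y\|$ (as $f$ is an isometry) together with the linearity of $F_y$ from Lemma~\ref{lem 5}, the right-hand side tends to $-F_y(u)\,\|w\|=F_y\bigl(\tfrac{w}{\|w\|}\bigr)\|w\|=F_y(w)$. Since $\|y+az\|\to1$, the first summand converges to $F_y(w)$, and the formula follows.

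The main obstacle is the careful bookkeeping of signs and normalisations: tangent directions in $\Lambda_y$ are recorded as limits of $\frac{y-y_a}{\|y-y_a\|}$ (with $y$ written first), so that $u=-\frac{w}{\|w\|}$, and this minus sign has to be tracked through the isometry identity and the linearity of $F_y$ in order to recover $F_y(w)$ with the correct orientation. A secondary point is the passage from the subsequential statement of Lemma~\ref{lem 4} to the honest one-sided limit $a\to0^+$, which is legitimate precisely because the tangent direction $u$ is the same for every sequence $a_n\to0^+$.
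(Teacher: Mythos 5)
Your proof is correct and follows essentially the same route as the paper's: the same decomposition $F(y+az)=\|y+az\|\,f\bigl(\tfrac{y+az}{\|y+az\|}\bigr)$ into a radial and a tangential part, with the radial term identified via the definition of $H(y,z)$ and the tangential term identified as $F_y(z-\langle y^*,z\rangle y)$ through Lemmas~\ref{lem 4} and~\ref{lem 5}. If anything you are more careful than the paper, which does not treat the degenerate case $z\in\R y$, does not explicitly verify $z-\langle y^*,z\rangle y\in\lin\Lambda_y$, and writes the tangent direction with the opposite orientation to its own convention for $\Lambda_y$ (harmless only because of the linear extension of $F_y$), whereas you track that sign explicitly.
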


\begin{proof}
Observe that
$$
\lim_{a \to 0^+}\frac1a \left( \frac{y + az}{\|y + az\|} - y\right)=
\lim_{a \to 0^+}\frac1a \left( y + az - \|y + az\|y\right)= z - y^*(z)y,
$$
and denote
$$
u := \lim_{a \to 0^+}\frac{ \frac{y + az}{\|y + az\|} - y}{\left\|\frac{y + az}{\|y + az\|} - y \right\|}
= \frac{z - y^*(z)y}{\|z - y^*(z)y\|}
$$
which, evidently, belongs to $\Lambda_y$. Now we can calculate
the limit that we need as follows:
\begin{align*}
[F'(y)](z) & = \lim_{a \to 0^+}\frac1a\left(\|y + az\|
f\left(\frac{y + az}{\|y + az\|}\right) -f(y)  \right)
\\ &
= \lim_{a \to 0^+}\frac1a\left(\|y + az\| - 1\right)f\left(\frac{y + az}{\|y + az\|}\right)
 \\ & \qquad \qquad +
\lim_{a \to 0^+}\frac1a\left( f\left(\frac{y + az}{\|y + az\|}\right) -f(y)\right)
\\ &
= y^*(z)f(y) + \lim_{a \to 0^+}\frac1a \left\|\frac{y + az}{\|y + az\|} - y \right\| \cdot
\lim_{a \to 0^+} \frac{ f\left(\frac{y + az}{\|y + az\|}\right) - f(y)}{\left\| f\left(\frac{y + az}{\|y + az\|}\right) - f(y) \right\|}
\\ &
= y^*(z)f(y) + \|z - y^*(z)y\| F_y(u) = y^*(z)f(y) + F_y(z - y^*(z)y). \qedhere
\end{align*}
\end{proof}

We are now ready to present the most important results of the
paper. The first one contains two sufficient conditions
assuring the differentiability of $F$.

\begin{theo} \label{diff-cond}
In the following cases we can guaranty the Gateaux
differentiability of $F$ in the point $y \in S_X$:
\begin{enumerate}
\item if $y \in \Sigma(X)$,
\item if $\lin \gimel(y) = X^*$.
\end{enumerate}
\end{theo}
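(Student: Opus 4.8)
The plan is to use Lemma~\ref{lem 7} as the engine: it expresses $[F'(y)](z)$ in terms of a support functional $y^* \in H(y,z)$ and the linear map $F_y$. Since $F_y$ is already linear on $\lin \Lambda_y$ by Lemma~\ref{lem 5}, and $z \mapsto \langle y^*, z\rangle f(y)$ is linear once $y^*$ is fixed, the whole expression for $[F'(y)](z)$ will be linear in $z$ \emph{provided} that the choice of $y^*$ can be made independent of $z$, i.e.\ that there is a single functional $y^*$ serving as a valid element of $H(y,z)$ for every direction $z$ simultaneously. This is exactly what happens in case (1): if $y \in \Sigma(X)$, then $\gimel(y) = \{\gamma(y)\}$ is a singleton, so $H(y,z) = \{\gamma(y)\}$ for every $z$, and Lemma~\ref{lem 7} immediately gives
\begin{equation*}
[F'(y)](z) = \langle \gamma(y), z\rangle\, f(y) + F_y\bigl(z - \langle \gamma(y), z\rangle\, y\bigr),
\end{equation*}
which is manifestly linear (hence, in finite dimensions, continuous) in $z$.

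For case (2) the difficulty is that $y$ need not be smooth, so $H(y,z)$ may depend genuinely on $z$ and the formula from Lemma~\ref{lem 7} could a priori be nonlinear. Here I would exploit the hypothesis $\lin \gimel(y) = X^*$. By Lemma~\ref{lem 5++}, $G_y$ is a linear isomorphism from $\lin \gimel(y) = X^*$ onto $\lin \gimel(f(y))$ satisfying the duality relation $\langle y^*, u\rangle = \langle G_y y^*, F_y u\rangle$ for all $y^* \in X^*$ and $u \in \lin \Lambda_y$; since case (b) of the tangent-cone description gives $\lin \Lambda_y = X$, the map $F_y$ is in fact defined (and linear) on all of $X$. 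The strategy is to show that the candidate formula
\begin{equation*}
T(z) := \langle y^*, z\rangle\, f(y) + F_y\bigl(z - \langle y^*, z\rangle\, y\bigr)
\end{equation*}
takes the same value regardless of which $y^* \in H(y,z) \subset \gimel(y)$ is chosen. Fixing $z$ and taking two functionals $y_1^*, y_2^* \in H(y,z)$, the difference $T_1(z) - T_2(z)$ equals $\langle y_1^* - y_2^*, z\rangle\bigl(f(y) - F_y(y)\bigr)$, so it suffices to check that $F_y(y) = f(y)$, which should follow from the duality relation of Lemma~\ref{lem 5++} together with the fact that every $w^* \in \gimel(f(y))$ satisfies $\langle w^*, f(y)\rangle = 1$ and, via $G_y^{-1}$, $\langle w^*, F_y(y)\rangle$ matches the pairing of the corresponding element of $\gimel(y)$ against $y$, which is also $1$; since $\gimel(f(y))$ is total (indeed 1-norming) this pins down $F_y(y) = f(y)$.

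The main obstacle I expect is this last consistency step, namely verifying that the $y^*$-dependence in Lemma~\ref{lem 7}'s formula genuinely cancels, equivalently that $F_y(y) = f(y)$. Once that identity is secured, the formula for $[F'(y)](z)$ becomes well-defined and is a sum of the linear map $F_y$ applied to $z$ (after the linear correction $z \mapsto z - \langle y^*,z\rangle y$, whose ambiguity has just been shown to be harmless because $F_y(y)=f(y)$) plus the term $\langle y^*,z\rangle f(y)$; linearity then follows from the linearity of $y^* \mapsto \langle y^*, z\rangle$ on the relevant component and the linearity of $F_y$ on $\lin \Lambda_y = X$. In finite dimensions continuity is automatic from linearity, so $F$ is Gateaux differentiable at $y$ in both cases.
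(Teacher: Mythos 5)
Your proposal is correct and follows essentially the same route as the paper: case (1) via the uniqueness of the support functional in Lemma~\ref{lem 7}, and case (2) by reducing the $y^*$-ambiguity in that formula to the identity $F_y(y)=f(y)$, which is then established from the duality relation of Lemma~\ref{lem 5++} and the fact that $\lin\gimel(f(y))=E^*$ (totality is all that is needed there; the parenthetical ``indeed 1-norming'' is unnecessary and not justified). The cancellation $T_1(z)-T_2(z)=\langle y_1^*-y_2^*,z\rangle\bigl(f(y)-F_y(y)\bigr)$ is exactly the paper's comparison against a fixed $x^*\in\gimel(y)$, so the argument goes through as you describe.
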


\begin{proof}
(1). If $y \in \Sigma(X)$, then $H(y,z) = \{\gam(y)\}$,
$$
[F'(y)](z) = \langle\gamma(y), z\rangle f(y) + F_y\left(z - \langle\gamma(y), z\rangle y\right),
$$
so it linearly depends on $z$. \vspace{3 mm}

(2). In this case $y$ is not a smooth point, so $\lin \Lambda_y
= X$, and $F_y(y)$ is correctly defined. Let us prove that
$f(y) - F_y(y) = 0$. In fact, according to Lemma~\ref{lem 5++},
$\dim \lin \gimel(y) = \dim \lin \gimel(f(y))$, consequently
$\lin \gimel(f(y)) = E^*$. This implies that it is sufficient
to show that $\langle G_y(y^*), f(y) - F_y(y)\rangle = 0$ for
all $y^* \in \gimel(y)$. In fact, according to the same
Lemma~\ref{lem 5++}
$$
\langle G_y(y^*), f(y) - F_y(y)\rangle = \langle G_y(y^*), f(y)\rangle - \langle G_y(y^*), F_y(y)\rangle
$$
$$
1 - \langle y^*, y \rangle = 0.
$$
Now, fix $x^* \in \gimel(y)$ and let us show that for every $z
\in X$
\begin{equation} \label{previous}
[F'(y)](z) = \langle x^*, z\rangle f(y) + F_y\left(z - \langle x^*, z\rangle y\right).
\end{equation}
This will give us the linearity of $[F'(y)](z)$ in the variable
$z$. Let us check \eqref{previous}. According to Lemma~\ref{lem
7} for $y^* \in H(y,z)$ we have the representation
$$
[F'(y)](z) = \langle y^*, z\rangle f(y) + F_y\left(z - \langle y^*, z\rangle y\right).
$$
Let us compare this with \eqref{previous}:
\begin{align*}
& \left(\langle y^*, z\rangle f(y) + F_y\left(z - \langle y^*, z\rangle y\right)\right) -
\left(\langle x^*, z\rangle f(y) + F_y\left(z - \langle x^*, z\rangle y\right)\right)
\\ & = \langle y^* - x^*, z\rangle \left(f(y) - F_y(y)\right) = 0. \qedhere
\end{align*}
\end{proof}

Two easy consequences can be stated.

\begin{cor}
If $\dim X = 2$, then $F$ is Gateaux differentiable in all
non-zero points.
\end{cor}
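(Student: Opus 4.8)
The plan is to reduce, via the homogeneity of $F$, to points of the unit sphere and then to verify that at every such point one of the two hypotheses of Theorem~\ref{diff-cond} is met. First I would record that, thanks to Tingley's Theorem~\ref{theo:tingley}, $F(-x)=-F(x)$, which combined with positive homogeneity gives $F(tx)=tF(x)$ for all $t\in\R$; thus $F$ is fully homogeneous. A routine manipulation of the difference quotient shows that for $\lambda>0$ the directional derivatives satisfy $[F'(\lambda y)](z)=[F'(y)](z)$, so $F$ is Gateaux differentiable at a nonzero $w\in X$ precisely when it is Gateaux differentiable at $w/\|w\|\in S_X$. It therefore suffices to establish differentiability at every $y\in S_X$.

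Fix $y\in S_X$ and split into two cases. If $y\in\Sigma(X)$, then case (1) of Theorem~\ref{diff-cond} applies directly. If $y\notin\Sigma(X)$, then $\gimel(y)$ contains two distinct functionals $y_1^*,y_2^*$, and the key observation is that distinct support functionals of the same point are automatically linearly independent: should $y_2^*=\mu y_1^*$, then $1=\|y_2^*\|=|\mu|$ together with $1=y_2^*(y)=\mu\,y_1^*(y)=\mu$ forces $\mu=1$, i.e.\ $y_1^*=y_2^*$, a contradiction. Since $\dim X^*=\dim X=2$, the independent pair $y_1^*,y_2^*$ already spans $X^*$, so $\lin\gimel(y)=X^*$ and case (2) of Theorem~\ref{diff-cond} applies.

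Combining the two cases yields Gateaux differentiability at every point of $S_X$, and the homogeneity reduction then delivers it at all nonzero points. I do not expect a genuine obstacle here: the only step demanding any care is the linear independence of two distinct support functionals, which is the elementary norm-and-value computation above, while the remainder is the two-dimensional bookkeeping (a $2$-dimensional non-smooth point must carry two independent supporting functionals) and the standard homogeneity argument reducing the statement to the sphere.
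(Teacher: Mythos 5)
Your proof is correct and follows exactly the route the paper intends: the paper presents this as an immediate consequence of Theorem~\ref{diff-cond}, and your case split (smooth points via case (1); non-smooth points via the observation that two distinct support functionals at $y$ are linearly independent and hence span the two-dimensional $X^*$, giving case (2)) together with the standard homogeneity reduction to $S_X$ is precisely that argument, just written out in full.
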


\begin{cor}
If $X$ is smooth (i.e.\ if every point of $S_X$ is smooth),
then $F$ is Gateaux differentiable in all non-zero points.
\end{cor}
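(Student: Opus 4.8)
The plan is to reduce everything to case~(1) of Theorem~\ref{diff-cond} together with the positive homogeneity of $F$. Saying that $X$ is smooth means precisely that $S_X = \Sigma(X)$, so every $y \in S_X$ satisfies hypothesis~(1) of Theorem~\ref{diff-cond}, and hence $F$ is Gateaux differentiable at every point of the unit sphere. What remains is to propagate differentiability from $S_X$ to an arbitrary non-zero point, and this is exactly where homogeneity enters.

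Concretely, I would fix $w \in X \setminus \{0\}$ and write $w = t y$ with $t = \|w\| > 0$ and $y = w/\|w\| \in S_X$. Using $F(\lam v) = \lam F(v)$ for $\lam > 0$, for every $z \in X$ and every $a > 0$ we have $F(w + a z) = t\,F\bigl(y + (a/t)z\bigr)$, so the substitution $s = a/t$ gives
\begin{equation*}
[F'(w)](z) = \lim_{a \to 0^+}\frac{t\,F\bigl(y + (a/t)z\bigr) - t\,F(y)}{a}
= \lim_{s \to 0^+}\frac{F(y + s z) - F(y)}{s} = [F'(y)](z).
\end{equation*}
Since $F$ is Gateaux differentiable at the smooth point $y$, the right-hand side is linear in $z$; hence $[F'(w)](z)$ is linear in $z$ as well, and in finite dimensions linearity of the directional derivative already yields continuity. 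Thus $F$ is Gateaux differentiable at $w$.

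There is no genuinely hard step here: the substance is carried by case~(1) of Theorem~\ref{diff-cond}, which delivers differentiability on all of $S_X$, and the only thing one must verify is the elementary rescaling identity $[F'(ty)](z) = [F'(y)](z)$ for $t > 0$ displayed above. (This is the same mechanism as in the preceding corollary, the difference being only that smoothness guarantees that every sphere point falls under case~(1) rather than under case~(2).)
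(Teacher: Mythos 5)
Your proposal is correct and matches the intended argument: the paper states this corollary without proof as an immediate consequence of case~(1) of Theorem~\ref{diff-cond} (smoothness of $X$ means every point of $S_X$ falls under that case), and your explicit rescaling computation $[F'(ty)](z)=[F'(y)](z)$ for $t>0$ correctly supplies the routine homogeneity step that extends differentiability from the sphere to all non-zero points.
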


Finally, we state the main result of the paper.

\begin{theo} \label{maintheo}
Let $X$ be an $m$-dimensional polyhedral space, $E$ a
finite-dimensional Banach space and $f:S_X\longrightarrow S_E$
a bijective isometry. Then, the homogeneous extension $F$ of
$f$ is a linear operator and, therefore, a linear isometry.
\end{theo}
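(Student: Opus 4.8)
The plan is to combine the differentiability results of Theorem~\ref{diff-cond} with the rigid facial structure of a polyhedral ball. Recall that for polyhedral $X$ the smooth points $\Sigma(X)$ are exactly the relative interiors of the $(m-1)$-dimensional faces (facets) of $S_X$, while each vertex $v$ of $S_X$ satisfies $\lin\gimel(v)=X^*$. Thus Theorem~\ref{diff-cond}(1) yields Gateaux differentiability of $F$ at every interior point of a facet, and Theorem~\ref{diff-cond}(2) yields genuine Gateaux differentiability of $F$ at every vertex. Writing $C_i=\cone(\mathcal F_i)$ for the full-dimensional cones over the facets $\mathcal F_i$, these cones cover $X$, so it suffices to show that $F$ agrees on each $C_i$ with one and the same linear map.

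First I would prove that $F$ is linear on each cone $C_i$. Fixing the facet normal $x_i^*$, which is constant on $\mathcal F_i$, Lemma~\ref{lem 7} specializes on $\operatorname{int}C_i$ to $[F'(z)](w)=\langle x_i^*,w\rangle f(\bar z)+F_{\bar z}(w-\langle x_i^*,w\rangle\bar z)$ with $\bar z=z/\|z\|$ and $F_{\bar z}$ the tangent isometry of Lemma~\ref{lem 5}. The goal is to see that this derivative does not depend on $z$, equivalently that $f$ is affine on the flat set $\mathcal F_i$: once the Gateaux derivative is constant on the connected open cone $\operatorname{int}C_i$, homogeneity forces $F(z)=[F'(z)](z)$ to equal that fixed linear map. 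Since $\mathcal F_i$ lies in the hyperplane $\{x_i^*=1\}\cap S_X$, the restriction $f|_{\mathcal F_i}$ is a metric isometry of a flat convex body into $E$, and along every segment it is a differentiable, constant-speed, length-minimizing curve; I would upgrade such a curve to an affine one and thereby obtain constancy of the derivative.

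The hard part will be exactly this affineness of $f$ on a facet. Because $E$ is not assumed strictly convex, length-minimizing curves in $S_E$ need not be straight, so one cannot simply invoke strict convexity; the argument must use the full force of differentiability together with the global bijectivity of $f$. In particular one should exploit that $f$ carries smooth points to smooth points by Lemma~\ref{lem 5++}, so that $\gamma(f(\bar z))$ varies continuously along the facet and the tangent isometries $F_{\bar z}$ are available throughout, and combine this with the differentiability of $F$ at the vertices of $\mathcal F_i$ to rule out any bending. This is the step where I expect the main technical effort to lie, and it is precisely where the polyhedrality of $X$ is indispensable.

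Granting linearity on each cone, write $F|_{C_i}=T_i$ for linear maps $T_i\colon X\to E$. The final step patches these together through the vertices. At a vertex $v$ of $\mathcal F_i$, $F$ is honestly Gateaux differentiable with derivative $L_v$; for a direction $w$ pointing into $\operatorname{int}C_i$ one has $v+aw\in\operatorname{int}C_i$ for small $a>0$, whence by continuity and homogeneity $[F'(v)](w)=T_iw$, while differentiability gives $[F'(v)](w)=L_vw$. As such inward directions span $X$, this forces $T_i=L_v$, so all facets sharing a common vertex carry the same linear map. Since adjacent facets share a ridge and hence a vertex, and the facet–ridge adjacency graph of a polytope is connected, all the $T_i$ coincide with a single linear map $T$. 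Therefore $F=T$ on $\bigcup_i C_i=X$, so $F$ is linear; being a bijection that carries $S_X$ onto $S_E$, it is automatically a linear isometry, which is the assertion of Theorem~\ref{maintheo}.
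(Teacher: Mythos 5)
Your overall architecture matches the paper's: show that $F$ is linear on each cone over a facet, then use the Gateaux differentiability of $F$ at the vertices (Theorem~\ref{diff-cond}(2), applicable because $\lin\gimel(v)=X^*$ at a vertex of a polyhedral ball) to force the linear maps attached to facets meeting at a common vertex to coincide, and conclude by connectedness of the facet adjacency structure. That second, patching step is carried out correctly in your proposal and is essentially verbatim what the paper does.

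The genuine gap is the first step, and you flag it yourself: you never actually prove that $f$ is affine on a facet. You describe it as ``the hard part'' and sketch an approach via constant-speed, length-minimizing differentiable curves which, as you concede, cannot be pushed through because $E$ is not assumed strictly convex, and you offer no substitute. The paper does not prove this step from differentiability at all; it quotes it from Tingley's original article \cite[p.~377]{ting}, where it is shown --- using Mankiewicz's theorem \cite{Mankiewicz} that a bijective isometry between convex bodies extends uniquely to an affine isometry --- that for each cone $C_j$ generated by an $(m-1)$-dimensional face there is a linear operator $A_j$ with $F(y)=A_jy$ on $C_j$. The idea you are missing is therefore not a curve-straightening argument but the combination of (i) the fact that $f$ maps a facet onto a convex body in $E$ and (ii) Mankiewicz's rigidity theorem, which yields affineness of $f$ on the facet with no smoothness or strict convexity hypotheses whatsoever; this is also the only place where polyhedrality of $X$ is used. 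As it stands, your text is an outline whose central step is an acknowledged open problem rather than a proof.
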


\begin{proof}
It is shown in \cite[p.~377]{ting} (using Mankiewicz result
\cite{Mankiewicz}), that for every cone $C_j$ generated by an
$(m-1)$-dimensional face of $S_X$ there is a linear operator
$A_j$, such that $F(y) = A_j y$ for $y \in C_j$. In every
vertex, according to (2) of Theorem~\ref{diff-cond}, $F$ is
Gateaux differentiable, so all the $A_j$ that correspond to
faces that meet in this vertex are the same. This means that
all $A_j$ are the same linear operator $A$ and so $F = A$.
\end{proof}

\section{Concluding remarks}

From the Tingley's problem about bijective isometries of
spheres one can extract two weaker questions:
\begin{enumerate}
\item[(1)] If such an isometry exists, is it true that the
    corresponding spaces are isomorphic?
\item[(2)] If such an isometry exists, is it true that the
    corresponding spaces are isometric?
\end{enumerate}

Of course, the first question is meaningful only in the
infinite-dimensional case. Remark that, since the homogeneous
extension $F$ of the the bijective isometry $f: S_X
\longrightarrow S_E$ is a Lipschitz homeomorphism
\cite[Proposition 4.1]{cheng}, the question (1) is closely
related to a still open problem of whether Lipschitz
homeomorphism of separable Banach spaces implies linear
isomorphism. This problem have been studied by a number of
extraordinary mathematicians, and there are many deep and
interesting partial results (\cite{heinrich}, \cite{kal}).

The second question is quite interesting even for
finite-dimensional spaces. Our Lemma~\ref{lem 5} means, in
particular, that for a smooth space $X$ the existence of a
bijective isometry $f: S_X \longrightarrow S_E$ implies that
every 1-codimensional subspace of $X$ is isometric to a
1-codimensional subspace of $E$, and this correspondence
between 1-codimensional subspaces is bijective. If $\dim X \ge
3$, then this condition is quite restrictive and we wonder
whether it implies that $X$ and $E$ are isometric.

\bibliographystyle{amsplain}

\end{document}